 \newtheorem{remark}{Remark}
 \newtheorem{theorem}[remark]{Theorem}
 \newtheorem{corollary}[remark]{Corollary}
\title{On the  partition dimension of trees}
\author{ Juan A.
Rodr\'{\i}guez-Vel\'{a}zquez$^{1}$, Ismael G. Yero$^{1}$ and\\
Magdalena Lema\'{n}ska$^{2}$\\
\\
$^1${\small Departament d'Enginyeria Inform\`{a}tica i Matem\`{a}tiques }\\
{\small Universitat Rovira i Virgili,  Av. Pa\"{\i}sos Catalans 26,
43007 Tarragona, Spain.} \\{\small ismael.gonzalez\@@urv.cat,
juanalberto.rodriguez\@@urv.cat} \\
$^2${\small Department of Technical Physics and Applied Mathematics}
\\ {\small Gdansk University of Technology, ul. Narutowicza 11/12
80-233 Gdansk, Poland }\\ {\small magda\@@mifgate.mif.pg.gda.pl}}
\begin{document}

\maketitle

\begin{abstract}
Given an ordered partition $\Pi =\{P_1,P_2, ...,P_t\}$ of the
vertex set $V$ of a connected graph $G=(V,E)$, the  \emph{partition representation} of a vertex
$v\in V$ with respect to the partition $\Pi$ is the vector
$r(v|\Pi)=(d(v,P_1),d(v,P_2),...,d(v,P_t))$, where $d(v,P_i)$ represents the distance between the vertex $v$ and
the set $P_i$. A partition  $\Pi$ of $V$ is a  \emph{resolving partition} of $G$ if different vertices of $G$ have different partition representations, i.e.,  for every pair
of vertices $u,v\in V$, $r(u|\Pi)\ne r(v|\Pi)$. The  \emph{partition
dimension} of $G$ is the minimum number of sets in any resolving
partition of $G$. In this paper we obtain several tight bounds
on the partition dimension of trees.\end{abstract}

{\it Keywords:}  Resolving sets, resolving partition, partition
dimension.

{\it AMS Subject Classification numbers:}   05C12

\section{Introduction}

 The concepts of resolvability and location in graphs were described
independently by Harary and Melter \cite{harary} and Slater
\cite{leaves-trees}, to define the same structure in a
graph. After these papers were published several authors
developed diverse theoretical works about this topic
\cite{pelayo1,pelayo2,chappell,chartrand,chartrand1,chartrand2,fehr,harary,haynes,landmarks}.
 Slater described the usefulness of these ideas into long range
aids to navigation \cite{leaves-trees}.
Also, these concepts  have some applications in chemistry for representing chemical compounds \cite{pharmacy1,pharmacy2} or to problems of
pattern recognition and image processing, some of which involve the use of hierarchical data structures \cite{Tomescu1}.
Other applications of this concept to
navigation of robots in networks and other areas appear in
\cite{chartrand,robots,landmarks}. Some variations on resolvability
or location have been appearing in the literature, like those about
conditional resolvability \cite{survey}, locating domination
\cite{haynes}, resolving domination \cite{brigham} and resolving
partitions \cite{chappell,chartrand2,fehr}.

Given a graph $G=(V,E)$ and a set of vertices
$S=\{v_1,v_2,...,v_k\}$ of $G$, the  \emph{metric representation} of a
vertex $v\in V$ with respect to $S$ is the vector
$r(v|S)=(d(v,v_1),d(v,v_2),...,d(v,v_k))$, where $d(v,v_i)$ denotes the distance between the vertices $v$ and
$v_i$, $1\le i\le k$. We say that $S$ is a  \emph{resolving set} of $G$ if different vertices of $G$ have different metric representations, i.e.,  for every
pair of vertices $u,v\in V$, $r(u|S)\ne r(v|S)$. The  \emph{metric
dimension}\footnote{Also called locating number.} of $G$ is the
minimum cardinality of any resolving set of $G$, and it is
denoted by $dim(G)$. The metric dimension of graphs is studied in
\cite{pelayo1,pelayo2,chappell,chartrand,chartrand1,tomescu}.

Given an ordered partition $\Pi =\{P_1,P_2, ...,P_t\}$ of the
vertices of $G$, the  \emph{partition representation} of a vertex
$v\in V$ with respect to the partition $\Pi$ is the vector
$r(v|\Pi)=(d(v,P_1),d(v,P_2),...,d(v,P_t))$, where $d(v,P_i)$, with
$1\leq i\leq t$, represents the distance between the vertex $v$ and
the set $P_i$, i.e.,  $d(v,P_i)=\min_{u\in P_i}\{d(v,u)\}$. We say
that $\Pi$ is a \emph{resolving partition} of $G$ if different vertices of $G$ have different partition representations, i.e.,  for every pair
of vertices $u,v\in V$, $r(u|\Pi)\ne r(v|\Pi)$. The  \emph{partition
dimension} of $G$ is the minimum number of sets in any resolving
partition of $G$ and it is denoted by $pd(G)$. The partition
dimension of graphs is studied in \cite{chappell,chartrand2,fehr,tomescu}.

\section{The partition dimension of trees}

It is natural to think that the partition dimension and metric dimension are related; in \cite{chartrand2} it was shown that for any
nontrivial connected graph $G$ we have
\begin{equation}\label{partition-dimension}pd(G)\le  dim(G) + 1.\end{equation}
We know that the partition dimension of any path is two. That is, for any path graph $P,$ it follows $pd(P)=dim(P)+1=2$. A formula for the dimension of trees that are not  paths has been established in \cite{chartrand,harary,leaves-trees}. In order to present this formula, we need additional definitions. A vertex of degree at least $3$ in a tree $T$ will be called a \emph{major vertex} of $T$.
Any leaf $u$ of $T$ is said to be a \emph{terminal vertex} of a major vertex $v$ of $T$ if
$d(u, v)<d(u,w)$ for every other major vertex $w$ of $T$. The \emph{terminal degree}  of
a major vertex $v$ is the number of terminal vertices of $v$. A major vertex $v$ of $T$ is an
\emph{exterior major vertex} of $T$ if it has positive terminal degree.
\begin{figure}[ht]
  \centering
  \includegraphics[width=0.3\textwidth]{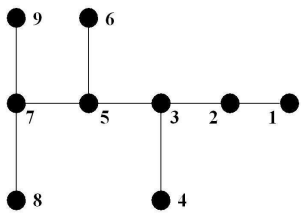}
  \caption{In this tree the vertex 3 is an exterior major vertex of terminal degree two: 1 and 4 are terminal vertices of 3. }\label{segundomejor}
\end{figure}

Let $n_1(T)$ denote the number of leaves of $T$, and let $ex(T)$ denote the number
of exterior major vertices of $T$. We can now state the formula for the dimension of a tree \cite{chartrand,harary,leaves-trees}:  if $T$ is a tree that is not a path, then
\begin{equation}\label{bounDimUsandoExterior}dim(T) = n_1(T) - ex(T).\end{equation}
As a consequence, if $T$ is a tree that is not a path, then
\begin{equation}\label{bounPartDimUsandoExterior}pd(T) \le  n_1(T) - ex(T)+1.\end{equation}


The above bound is tight, it is achieved for the graph in Figure \ref{segundomejor} where $\Pi=\{\{8\},\{4,9\},\{1,2,3,5,6,7\}\}$ is
a resolving partition and  $pd(T)=3$.
However, there are graphs for which the following bound gives better result than bound (\ref{bounPartDimUsandoExterior}), for instance, the graph in Figure \ref{2arbolxx}.

\begin{figure}[ht]
\centering
    \includegraphics[width=0.33\textwidth]{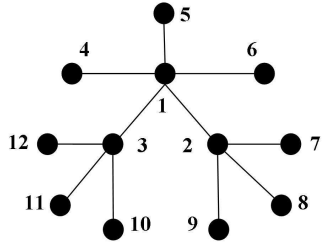}
  \caption{
$\Pi=\{\{1,4,9,12\},\{3,5,8,11\},\{2,6,7,10\}\}$ is
a resolving partition.
}\label{2arbolxx}
\end{figure}

Let $S=\{s_1,s_2,...,s_{\kappa}\}$ be the set of exterior major
vertices of $T=(V,E)$ with terminal degree greater than one, let $\{s_{i1},s_{i2},..., s_{il_i}\}$ be the set
of terminal vertices of $s_i$ and let $\tau=\max_{1\le i\le \kappa}\{l_i\}$. With the above notation we have the following result.

\begin{theorem}\label{ThPdTrees}
For any tree $T$  which is not a path, $$pd(T)\le
\kappa+\tau-1.$$
\end{theorem}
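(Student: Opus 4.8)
The plan is to exhibit an explicit resolving partition of $T$ using exactly $\kappa+\tau-1$ classes. For each $s_i\in S$ let $L_{i1},L_{i2},\ldots,L_{il_i}$ denote the \emph{legs} of $s_i$, that is, the pendant paths joining $s_i$ to its terminal vertices $s_{i1},\ldots,s_{il_i}$, listed in some fixed order; note that $l_i\le\tau$ and that every leaf of $T$ lies on exactly one leg of a unique exterior major vertex. The guiding idea is that the only genuinely hard pairs to separate are two vertices sitting at equal depth on two different legs of a \emph{common} $s_i$; these will be handled by giving the legs of each major vertex distinct ``colours'', reusing the same colours across different major vertices while keeping the centres themselves separated by a second family of classes.

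Concretely, I would introduce $\tau-1$ \emph{leg classes} $P_1,\ldots,P_{\tau-1}$ together with $\kappa$ \emph{region classes} $R_1,\ldots,R_\kappa$, giving $\kappa+\tau-1$ sets in all. For every $i$ and every $j$ with $2\le j\le l_i$ I place the whole leg $L_{ij}$ into $P_{j-1}$; this is legitimate because $l_i-1\le\tau-1$, so no class index is ever exceeded, and since $s_i$ has only one $j$-th leg, $P_{j-1}$ meets each centre in a single leg. I put each $s_i$ into $R_i$, and I assign every remaining vertex $w$ (the first legs $L_{i1}$, the interior vertices, and the single legs of any exterior major vertices of terminal degree one) to the region class $R_m$ whose centre $s_m$ is nearest to $w$ in $T$, breaking ties by the fixed index order so that each region is a connected subtree containing its centre.

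To verify that $\Pi=\{P_1,\ldots,P_{\tau-1},R_1,\ldots,R_\kappa\}$ resolves $T$, I would use the observation that $d(x,C)=0$ holds exactly when $x\in C$, so it suffices to separate pairs $u\ne v$ lying in a common class. Two vertices of one leg class $P_{j-1}$ that belong to the same $s_i$ necessarily lie on the single leg $L_{ij}$, hence differ in depth and therefore in the coordinate $d(\cdot,R_i)$, which equals the depth since no vertex of $L_{ij}$ lies in $R_i$ and the nearest vertex of $R_i$ is $s_i$. Two vertices of $P_{j-1}$ belonging to distinct centres $s_i,s_{i'}$ are separated by the region coordinates: writing out $d(\cdot,R_i)$ and $d(\cdot,R_{i'})$ in terms of the depths and of $d(s_i,s_{i'})$, a short computation shows the two centres cannot both fail to distinguish the pair, since the resulting equalities would force the impossible relation $d(s_i,s_{i'})+1=0$.

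The step I expect to be the main obstacle is the remaining case, two vertices lying in a \emph{common} region class $R_m$ and on none of the indexed legs. When both lie on $L_{m1}$, or one is $s_m$, the coordinate $d(\cdot,P_1)$ (distance to the parallel leg $L_{m2}$, which exists because $s_m\in S$ forces $\tau\ge2$) equals depth plus one and separates them; the delicate subcase is two interior vertices of $R_m$. A tree can be locally symmetric, and a careless assignment may leave such a pair with identical representations, which happens precisely when they hang at equal distance off a common vertex $c$ with all of $S$ beyond $c$, so that every region and every leg coordinate factors through $c$ and coincides. The way around this is that $\tau\ge2$ guarantees at least one spare leg class, so one vertex on a single side of each such symmetric pair can be recoloured into it; carrying this out consistently along a fixed rooted traversal breaks all residual coincidences. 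Checking that this recolouring never raises the number of classes above $\kappa+\tau-1$, together with the cases above, would complete the proof of the bound.
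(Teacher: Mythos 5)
Your construction is genuinely different from the paper's. The paper keeps a \emph{single} residual class $A$ containing all the centres $s_i$ and all interior vertices, and instead promotes each first leg $S_{i1}$ to its own class $A_i$; since $A_i$ is a pendant path hanging at $s_i$, this makes $d(x,A_i)=d(x,s_i)+1$ for every $x$ outside $A_i$, so each coordinate is an honest ``distance to $s_i$'' landmark, and $d(x,A)=d(x,s_i)$ for $x$ on a leg of $s_i$. You do the opposite: you split the residue into $\kappa$ Voronoi-type regions $R_1,\dots,R_\kappa$, each containing its centre, its first leg, and the nearby interior. The price is that $d(x,R_j)$ is the distance to a whole connected subtree rather than to a single vertex, which is exactly where your argument runs into trouble.

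The gap is the case you yourself flag: two vertices in a common region class $R_m$. You do not prove this case; you assert that collisions ``happen precisely when they hang at equal distance off a common vertex $c$ with all of $S$ beyond $c$'' (this characterization is itself unproven --- coincidences of the form $d(u,C)=d(v,C)$ need not all factor through one vertex $c$, since different classes $C$ can be reached through different branch points), and you then invoke an unspecified ``recolouring along a fixed rooted traversal.'' As it stands this is not a proof: the recolouring is not defined, it is not shown to terminate or to eliminate all coincidences, it is not shown to preserve the separations already established (an interior vertex moved into a leg class $P_1$ must now be re-compared against every leg vertex already in $P_1$, and your $P_1$-separation argument relied on $P_1$-vertices lying on pendant legs), and it is not shown to keep the number of classes at $\kappa+\tau-1$. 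Note that this case is not a corner case --- the region classes contain all interior vertices, all first legs, and all pendant branches off major vertices of terminal degree one, i.e.\ most of the tree. The paper avoids the entire issue because its residual class $A$ is resolved by comparing distances to the pendant classes $A_i$ (Cases 3 and 4 of its proof), which reduce to distances to the fixed vertices $s_i$; to salvage your approach you would need either to prove that your regions never produce a coincidence (a nontrivial structural argument about branches containing no vertex of $S$), or to specify and verify the repair in full.
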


\begin{proof}
 For a terminal vertex
$s_{ij}$ of a major vertex $s_i\in S$ we denote by $S_{ij}$ the set of vertices of $T$, different from $s_i$, belonging to the $s_i-s_{ij}$ path. If $l_i< \tau-1$, we assume $S_{ij}=\emptyset$ for every $j\in \{l_i+1,...,\tau-1 \}$.
Now for every $j\in \{2,...,\tau -1\}$, let $B_j=\cup_{i=1}^{\kappa}S_{ij}$  and,  for every $i\in \{1,...,\kappa\}$, let
$A_i=S_{i1}$.
Let us show that
$\Pi=\{A,A_1,A_2,...A_{\kappa},B_2,...,B_{\tau-1}\}$ is a
resolving partition of $T$, where $A=V-\left(\displaystyle\left(\cup_{i=1}^{\kappa}A_i\right)\cup \left(\cup_{j=2}^{\tau-1}B_j\right)\right)$.  We consider two different vertices
$x,y\in V$. Note that if $x$ and $y$ belong to different sets of
$\Pi$, we have $r(x|\Pi)\ne r(y|\Pi)$.

Case 1: $x,y\in S_{ij}$. In this case, $d(x,A)=d(x,s_i)\ne d(y,s_i)=d(y,A)$.

Case 2: $x\in S_{ij}$ and $y\in S_{kl}$, $i\ne k$. If $j=1$ or $l=1$, then $x$ and $y$ belong to different sets of $\Pi$. So we suppose $j\ne1$ and $l\ne1$. Hence, if  $d(x,A_i)=d(y,A_i)$, then
\begin{align*}
d(x,A_k)&=d(x,s_i)+d(s_i,s_k)+1\\
&=d(x,A_i)+d(s_i,s_k)\\
&=d(y,A_i)+d(s_i,s_k)\\
&=d(y,s_k)+2d(s_k,s_i)+1\\
&=d(y,A_k)+2d(s_k,s_i)\\
&>d(y,A_k).
\end{align*}

Case 3:  $x\in S_{i\tau}$ and $y\in A- \cup_{l=1}^{\kappa} S_{l\tau}$. If  $d(x,A_i)=d(y,A_i)$, then $d(x,s_i)=d(y,s_i)$. Since $y\notin S_{l\tau}$, $l\in \{1,...,\kappa\}$, there exists $A_j\in \Pi$,  $j\ne i$, such that $s_i$ does not belong to the $y-s_j$ path. Now let $Y$ be the set of vertices belonging to the $y-s_j$ path, and let $v\in Y$ such that $d(s_i,v)=\displaystyle\min_{u\in Y}\{d(s_i,u)\}$. Hence,
\begin{align*}
d(x,A_j)&=d(x,s_i)+d(s_i,v)+d(v,s_j)+1\\
&=d(y,s_i)+d(s_i,v)+d(v,s_j)+1\\
&=d(y,v)+2d(v,s_i)+d(v,s_j)+1\\
&=d(y,A_j)+2d(v,s_i)\\
&>d(y,A_j).
\end{align*}

Case 4: $x,y\in A'=A- \cup_{l=1}^{\kappa} S_{l\tau}$. If for some exterior major vertex $s_i\in S$,
the vertex $x$ belongs to the $y-s_i$ path or the vertex $y$ belongs to the $x-s_i$ path, then $d(x,A_i)\ne d(y,A_i)$. Otherwise, there exist at least two exterior major vertices $s_i$, $s_j$ such that the $x-y$ path and the $s_i-s_j$ path share more than one vertex (if not, then $x,y\notin A'$). Let $W$ be the set of vertices belonging to the $s_i-s_j$ path. Let $u, v\in W$ such that $d(x,u)=\min_{z\in W}\{d(x,z)\}$ and $d(y,v)=\min_{z\in W}\{d(y,z)\}$. We suppose, without loss of generality, that $d(s_i,u)>d(v,s_i)$.  Hence, if $d(x,v)= d(y,v)$, then $d(x,u)\ne d(y,u)$, and if $d(x,u)= d(y,u)$, then $d(x,v)\ne d(y,v)$. We have,
\begin{align*}d(x,A_j)&
=d(x,u)+d(u,s_j)+1\\
&\ne d(y,u)+d(u,s_j)+1\\
&=d(y,A_j)
\end{align*}
or
\begin{align*}d(x,A_i)&=d(x,v)+d(v,s_i)+1\\
&\ne d(y,v)+d(v,s_i)+1\\
&=d(y,A_i).
\end{align*}

Therefore, for  different vertices $x,y\in V,$ we have
$r(x|\Pi)\ne r(y|\Pi)$.
\end{proof}

One example where $pd(T) = \kappa+\tau-1$ is the tree in Figure \ref{segundomejor}.


Any vertex adjacent to a leaf of a tree $T$ is called a support vertex. In the following result $\xi$ denotes the number of support vertices of $T$ and  $\theta$ denotes  the maximum number of leaves adjacent to a support vertex of $T$.

\begin{corollary}\label{ThPdTrees2}
For any tree $T$  of order $n\ge 2$,  $pd(T)\le \xi+\theta-1$.
\end{corollary}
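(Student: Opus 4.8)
The plan is to derive the bound from Theorem \ref{ThPdTrees} together with a purely combinatorial comparison of the two parameter pairs, after disposing of the path case by hand. Since $pd(P)=2$ for every path $P$ of order at least two, I would first check directly that $\xi+\theta-1=2$ for each path: if $n=2$ both vertices are support vertices, each with one adjacent leaf, so $\xi=2,\theta=1$; if $n=3$ the central vertex is the unique support vertex and has two adjacent leaves, so $\xi=1,\theta=2$; and if $n\ge 4$ there are exactly two support vertices, each with a single adjacent leaf, so $\xi=2,\theta=1$. In all cases $\xi+\theta-1=2=pd(P)$, settling the path case.

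For a tree $T$ that is not a path the goal is to prove the inequality $\kappa+\tau\le\xi+\theta$, since Theorem \ref{ThPdTrees} then gives $pd(T)\le\kappa+\tau-1\le\xi+\theta-1$. Observe first that a non-path tree always has an exterior major vertex of terminal degree at least two: fixing any major vertex $r$ and taking a major vertex $u$ farthest from $r$, the branches of $u$ leading away from $r$ contain no further major vertex, so they terminate in at least two leaves terminal to $u$; hence $\kappa\ge 1$, $\tau\ge 2$, and Theorem \ref{ThPdTrees} applies.

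To establish $\kappa+\tau\le\xi+\theta$ I would count support vertices. Let $s^{\ast}\in S$ realize the maximum terminal degree $\tau$, and let $p$ be the number of leaves adjacent to $s^{\ast}$; since these $p$ leaves all sit at $s^{\ast}$, we get $\theta\ge p$. Each of the remaining $\tau-p$ terminal vertices of $s^{\ast}$ lies at distance at least two, so its support vertex is an interior vertex of the corresponding terminal path, and these $\tau-p$ support vertices are pairwise distinct and distinct from $s^{\ast}$. Moreover, each of the other $\kappa-1$ exterior major vertices has terminal degree at least two and so contributes at least one support vertex inside its own branch. If $p\ge 1$ then $s^{\ast}$ is itself a support vertex, giving $\xi\ge 1+(\tau-p)+(\kappa-1)=\kappa+\tau-p$ and hence $\xi+\theta\ge(\kappa+\tau-p)+p=\kappa+\tau$; if $p=0$ then $\xi\ge\tau+(\kappa-1)$ while $\theta\ge 1$, which again yields $\xi+\theta\ge\kappa+\tau$.

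The main obstacle is the disjointness bookkeeping underlying this count: one must verify that the support vertices attributed to $s^{\ast}$ (itself, plus one interior vertex on each of its long terminal paths) and those attributed to the other exterior major vertices really are all distinct. This reduces to the structural fact that any vertex lying on a terminal path of a major vertex $v$ has $v$ as its unique nearest major vertex, so terminal paths belonging to distinct major vertices cannot share such an interior vertex. Once this is argued cleanly, the counting above is routine, and the combination with the path case and Theorem \ref{ThPdTrees} completes the proof.
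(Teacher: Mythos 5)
Your proposal is correct and follows essentially the same route as the paper: dispose of the path case directly, then reduce to Theorem \ref{ThPdTrees} by proving $\kappa+\tau\le\xi+\theta$ through counting support vertices attached to the exterior major vertices. Your more careful case split on $p$ (and on $n=3$ for paths) in fact repairs a small inaccuracy in the paper's one-line count, whose intermediate claim $\kappa+y\le\xi$ can fail when no leaf is adjacent to $v$ (e.g.\ a spider with three legs of length two has $\kappa+y=4$ but $\xi=3$), although the final inequality $\kappa+\tau\le\xi+\theta$ survives exactly as you argue via $\theta\ge 1$.
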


\begin{proof}
If $T$ is a path, then $\xi=2$ and $\theta=1$, so the result follows. Now we suppose $T$ is not a path.
Let $v$ be an exterior major vertex of terminal degree $\tau$. Let $x$ be the number of leaves adjacent to $v$ and let $y=\tau-x$. Since $\kappa+y\le \xi$ and $x\le \theta$, we deduce $\kappa+\tau\le \xi+\theta$.
\end{proof}


The above bound is achieved, for instance, for the graph of order  six composed by two support vertices $a$ and $b$, where $a$ is adjacent to $b$, and four leaves; two of them are adjacent to $a$ and the other two leaves are adjacent to $b$. One example of graph for which Theorem \ref{ThPdTrees}  gives better result than Corollary \ref{ThPdTrees2} is the graph in Figure \ref{segundomejor}.

Since the number of leaves, $n_1(T),$ of a tree $T$ is bounded below by $\xi+\theta-1$,   Corollary \ref{ThPdTrees2} leads to the following bound.
\begin{remark}
For any tree $T$  of order $n\ge 2$, $pd(T)\le n_1(T)$.
\end{remark}

Now we are going to characterize all the trees for which $pd(T)= n_1(T)$.
It was shown in \cite{chartrand2} that $pd(G)=2$ if and only if the graph $G$ is a path. So by the above remark we obtain the following result.
\begin{remark}\label{RemarkPDtreesthreeleaves}
Let  $T$  be a tree of order $n\ge 4$. If $n_1(T)=3$, then $ pd(T)= 3$.
\end{remark}
\begin{theorem}
Let $T$ be a tree  with $n_1(T)\ge 4$. Then
$pd(T)=n_1(T)$ if and only if $T$ is the star graph.
\end{theorem}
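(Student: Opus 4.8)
The plan is to prove both implications, the reverse (``only if'') one being the substantial part. For the forward direction, suppose $T$ is the star $K_{1,n_1}$. Its $n_1$ leaves are pairwise false twins, each having the centre as its only neighbour, so $d(u,x)=d(w,x)$ for any two leaves $u,w$ and any vertex $x\notin\{u,w\}$. Hence any two leaves lying in a common part would get identical partition representations, and every resolving partition must place the $n_1$ leaves in $n_1$ distinct parts; thus $pd(T)\ge n_1$. Combined with the Remark $pd(T)\le n_1(T)$ this gives $pd(T)=n_1$.

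For the converse I would prove the contrapositive: if $T$ is not a star (and $n_1\ge 4$), then $pd(T)\le n_1-1$. Since $n_1\ge 4$, $T$ is not a path, so by (\ref{bounPartDimUsandoExterior}) we have $pd(T)\le n_1(T)-ex(T)+1$. Whenever $ex(T)\ge 2$ this already yields $pd(T)\le n_1-1$, so it remains to treat $ex(T)=1$. The key structural fact is that $ex(T)=1$ forces $T$ to be a spider: a major vertex other than the unique exterior major vertex $c$ that is farthest from $c$ would have at least two pendant paths free of further major vertices, hence positive terminal degree, producing a second exterior major vertex; so $c$ is the only major vertex, every other vertex has degree at most two, and $T$ consists of $n_1$ legs emanating from $c$. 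As $T$ is not a star, at least one leg, say $L_1$, has length at least two.

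For such a spider I would exhibit an explicit resolving partition with $n_1-1$ parts. Write $\ell_i$ for the leaf of leg $L_i$, keep $\ell_2,\dots,\ell_{n_1}$ in $n_1-1$ distinct parts (and, for the legs other than $L_1$, place their internal vertices in their own leg's part), and then distribute the remaining vertices, namely the centre $c$ and all of $L_1$, among these parts so that \emph{consecutive} vertices of $L_1$ fall into \emph{different} parts. This directly attacks the only serious obstruction: two vertices at the same depth on different legs, or an internal vertex of $L_1$ and a twin leaf $\ell_i$ at the same depth, have equal distance to every other leg and would collide if they sat together with no deeper marker nearby; spreading the successive vertices of $L_1$ across parts supplies, for each such pair, a part whose nearest element lies one step deeper along $L_1$ (or is $c$), and the internal vertices of $L_1$ are additionally distinguished by being mutually close along $L_1$. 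The hypothesis $n_1\ge 4$ is exactly what furnishes enough parts to carry out this spreading; for $n_1=3$ it breaks down, consistently with Remark \ref{RemarkPDtreesthreeleaves}.

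I expect the verification of this partition to be the main obstacle. One must check $r(x|\Pi)\ne r(y|\Pi)$ for all pairs, organised by whether $x,y$ lie on the same leg, on two distinct ``measured'' legs, or inside the part(s) carrying $L_1$ and $c$, and the delicate subcase is separating the internal vertices of $L_1$ from the equal-depth twin leaves $\ell_i$, where one must confirm that the chosen placement always provides a part whose nearest element breaks the tie. Careful bookkeeping of the distances $d(x,P)=\min_{u\in P}d(x,u)$ in the spider metric, where $d(u_{i,p},u_{j,q})=p+q$ for $i\ne j$ and $|p-q|$ for $i=j$, should dispatch all the cases.
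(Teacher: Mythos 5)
Your forward direction is correct and complete: in the star any two leaves are at equal distance from every third vertex, so no resolving partition can put two leaves in the same part, giving $pd(T)\ge n_1$, and the Remark supplies the reverse inequality. Your reduction of the converse to the case $ex(T)=1$ via bound (\ref{bounPartDimUsandoExterior}), together with the observation that $ex(T)=1$ forces $T$ to be a spider with a single major vertex, also follows the paper's route (the paper simply asserts $ex(T)=1$ and then works with the paths from the unique exterior major vertex to the leaves); your justification of the spider structure is a sound addition.

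The gap is in the construction of the $(n_1-1)$-part partition. The conditions you impose --- each of $\ell_2,\dots,\ell_{n_1}$ in its own part, the internal vertices of $L_i$ ($i\ge 2$) with $\ell_i$, and $c$ together with $L_1$ distributed so that consecutive vertices of $L_1$ land in different parts --- do not determine the partition, and not every partition satisfying them is resolving. Take the spider with legs of lengths $2,1,1,1$: centre $c$, leaves $\ell_2,\ell_3,\ell_4$ adjacent to $c$, and $L_1=cv_1v_2$. The partition $P_2=\{\ell_2,v_1\}$, $P_3=\{\ell_3,c,v_2\}$, $P_4=\{\ell_4\}$ meets all of your constraints, yet $r(\ell_2|\Pi)=r(v_1|\Pi)=(0,1,2)$: the deeper marker $v_2\in P_3$ that was supposed to separate the depth-one pair is masked by $c$, which lies in the same part at distance $1$ from both vertices. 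So the ``delicate subcase'' you flag (an internal vertex of $L_1$ against an equal-depth leaf) is not merely delicate; it genuinely fails for some admissible placements, and the argument cannot be completed without fixing a specific placement and verifying it. The paper does exactly this: it anchors the construction on a leaf $u_t$ \emph{farthest} from the centre and splits its path asymmetrically, putting the vertex $u_{t1}$ adjacent to the centre into the part of leg $2$ and the rest of that path into the part of leg $1$, keeping the centre and the remaining interior vertices in a separate part $A$, and then checks three cases by explicit distance computations. Until you specify a placement at that level of detail and carry out the corresponding case analysis, the converse direction is not proved.
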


\begin{proof}$\;$
If $T=S_n$ is a star graph,  it is clear that $pd(T)=n_1(T)$. Now, let
$T=(V,E)\ne S_n$, such that $pd(T)=n_1(T)\ge 4$. Note that by
(\ref{bounPartDimUsandoExterior}) we have $ex(T)=1$. Let  $t=n_1(T)$ and let $\Omega=\{u_1, u_2, ...,
u_t\}$ be the set of leaves of $T$. Let $u\in
V$ be the unique exterior major vertex of $T$. Let us suppose,
without loss of generality, $u_t$ is a leaf of $T$ such that
$d(u_t,u)=\max_{u_i\in \Omega}\{d(u_i,u)\}$.

For the leaves $u_1,u_2,u_t\in \Omega$ let the paths
$P=uu_{t1}u_{t2}...u_{tr_t}u_t$, $Q=uu_{11}u_{12}...u_{1r_1}u_1$ and
$R=uu_{21}u_{22}...u_{2r_2}u_2$. Now, let us form the partition
$\Pi=\{A_1,A_2,...,A_{t-2},A\}$, such that
$A_1=\{u_{11},u_{12},...,u_{1r_1},u_1,u_{t2},u_{t3},...,u_{tr_t},u_t\}$,
$A_2=\{u_{21},u_{22},...,u_{2r_2},u_2,u_{t1}\}$, $A_i=\{u_i\}$,
$i\in \{3,..,t-2\}$ and $A=V-\cup_{i=1}^{t-2}A_i$. Let us consider
two different vertices $x,y\in V$. Hence, we have the following
cases,

Case 1: $x,y\in A_1$. Let us suppose $x\in P$ and $y\in Q$. If
$d(x,A_2)=d(y,A_2)$, then we have
\begin{align*}
d(x,A)=&d(x,u_{t1})+1\\
=&d(x,A_2)+1\\
=&d(y,A_2)+1\\
=&d(y,A)+2\\
>&d(y,A).
\end{align*}
Now, if $x,y\in P$ or $x,y\in Q$, then $d(x,A)\ne d(y,A)$.

Case 2: $x,y\in A_2$. If $x=u_{t1}$ or $y=u_{t1}$, then let us
suppose for instance, $x=u_{t1}$, so we have $d(x,A_1)=1<2\le
d(y,A_1)$. On the contrary, if $x,y\in R$, then $d(x,A)\ne d(y,A)$.

Case 3: $x,y\in A$. If $d(x,A_1)=d(y,A_1)$, then $t\ge 5$ and there exists a leaf
$u_i$, $i\ne 1,2,t-1,t$, such that $d(x,A_i)=d(x,u_i)\ne
d(y,u_i)=d(y,A_i)$.

Therefore, for different vertices $x,y\in V$ we have $r(x|\Pi)\ne r(y|\Pi)$ and
$\Pi$ is a resolving partition in $T$, a contradiction.
\end{proof}

\begin{figure}[ht]
  \centering
\includegraphics[width=0.35\textwidth]{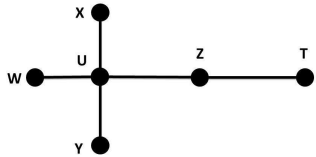}
\vspace{-1cm}
  \caption{A Comet graph where $3=\theta=Pd(T).$ }\label{polemico}
\end{figure}
Let $T$ be the Comet graph showed in Figure \ref{polemico}. A resolving partition for $T$  is $\Pi=\{A_1,A_2,A_3\}$,
where $A_1=\{x,t\}$, $A_2=\{y,z\}$ and $A_3=\{u,w\}$. In this case, $Pd(T)=3\theta.$

\begin{remark}
For any tree $T$  of order $n\ge 2$, $ pd(T)\ge \theta $.
\end{remark}

\begin{proof}
Since different leaves adjacent to the same support vertex must belong to different sets of a resolving partition, the result follows.
\end{proof}

Other examples  where $pd(T)= \theta$ are the star graphs and the graph in Figure \ref{2arbolxx}.

\begin{theorem}
Let $T$ be a  tree. If every vertex belonging to the path between two exterior major vertices of terminal degree greater than one is an exterior major vertex of terminal degree greater than one, then $$pd(T)\le \max\{\kappa,\tau+1\}.$$
\end{theorem}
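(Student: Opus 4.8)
The plan is to construct an explicit resolving partition with $m=\max\{\kappa,\tau+1\}$ classes, reusing the terminal-path notation $S_{ij}$ introduced in the proof of Theorem~\ref{ThPdTrees}. First I would record the structural consequence of the hypothesis: since the path joining any two vertices of $S=\{s_1,\dots,s_\kappa\}$ consists entirely of vertices of $S$, the set $S$ is geodesically convex, so the subgraph $B=T[S]$ is a subtree of $T$ (the \emph{backbone}), and distinct terminal paths hang off the $s_i$ with no intervening backbone vertex. The hypothesis enters only through this structural fact, which is immediate.

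Next I would define a colouring $c:V\to\{1,\dots,m\}$ and set $\Pi=\{P_1,\dots,P_m\}$ with $P_k=c^{-1}(k)$. The two numerical constraints $m\ge\kappa$ and $m\ge\tau+1$ dictate the two ingredients. Because $m\ge\kappa$, I can fix an injection $\rho$ from $\{1,\dots,\kappa\}$ into $\{1,\dots,m\}$ and give each backbone vertex its own \emph{home} colour, $c(s_i)=\rho(i)$; this alone forces $r(s_i|\Pi)\ne r(s_k|\Pi)$ for $i\ne k$, since $d(s_i,P_{\rho(i)})=0<d(s_k,P_{\rho(i)})$. Because $m\ge\tau+1$, at each $s_i$ the at most $\tau$ terminal paths $S_{i1},\dots,S_{il_i}$ can be coloured with pairwise distinct colours chosen from $\{1,\dots,m\}\setminus\{\rho(i)\}$; avoiding the home colour $\rho(i)$ guarantees that for $v\in S_{ij}$ the nearest vertex of $P_{\rho(i)}$ is $s_i$ itself, so $d(v,P_{\rho(i)})=d(v,s_i)$ is strictly monotone along the path. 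Any vertex lying neither on the backbone nor on a terminal path I would assign the home colour of its nearest backbone vertex, so that $c$ is a genuine function into $\{1,\dots,m\}$ and no overflow of classes occurs.

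Finally I would verify that $\Pi$ resolves $T$ through a case analysis modelled on the proof of Theorem~\ref{ThPdTrees}: two vertices on a common terminal path are separated by the monotone coordinate $d(\cdot,P_{\rho(i)})$; two vertices on different terminal paths of the same $s_i$ lie in different classes by the distinct-colour choice; and for vertices on terminal paths of different backbone vertices, as well as for backbone-versus-terminal comparisons, I would separate them using the distance back through the relevant $s_i$, exactly as in Cases 2--4 there.

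I expect the main obstacle to be the comparisons that involve backbone vertices of maximal terminal degree $\tau$. Such a vertex sees every terminal colour $1,\dots,\tau$ at distance $1$, so it is invisible to the ``colour'' coordinates and can only be told apart from the other members of its own class $P_{\rho(i)}$ through the home colours $\rho(k)$ of the remaining backbone vertices. This is precisely what forces both requirements $m\ge\kappa$ (distinct homes for the backbone) and $m\ge\tau+1$ (a spare colour available at every vertex), and carrying out this step together with the treatment of the off-backbone vertices is where the real work lies.
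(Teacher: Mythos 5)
Your construction is essentially the paper's own: the paper realizes your injection $\rho$ concretely by letting the class $A_i$ consist of $s_i$ together with all terminal paths labelled $i$ at the various backbone vertices, using an index shift so that no terminal path at $s_i$ ever receives label $i$ (your ``avoid the home colour'' rule), and it then verifies resolvability by the same comparisons you outline, separating $s_i$ from a same-class vertex via a terminal-path colour present at distance $1$ from $s_i$. The only divergence is that the paper invokes the hypothesis to conclude that the backbone and the terminal paths already cover all of $V$, so your extra class of ``leftover'' off-backbone vertices is empty.
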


\begin{proof}
If $T$ is a path, then $\tau= 2$ and $\kappa=1$, so the result follows.   We suppose $T=(V,E)$ is not a path. Let
$S=\{s_1,s_2,...,s_{\kappa}\}$ be the set of exterior major vertices
of $T$ with terminal degree greater than one and let $B_i=\{s_i\}$, $i=1,...,\kappa$. If $\kappa<\tau+1$, then for $i\in \{\kappa+1,...,\tau+1\}$ we assume $B_i=\emptyset$. Let $l_i$ be the terminal degree of $s_i$,  $i\in \{1,...,\kappa\}$.
If $l_i<i$, then we denote by $\{s_{i1},...,s_{il_i}\}$   the set of terminal
vertices of $s_i$. On the contrary, if $l_i\ge i$, then the set of terminal
vertices of $s_i$ is denoted by $\{s_{i1},...,s_{ii-1},s_{ii+1},...,s_{il_i+1}\}$.
 Also, for a terminal vertex $s_{ij}$ of a major
vertex $s_i$ we denote by $S_{ij}$ the set of vertices of $T$,
different from $s_i$, belonging to the $s_i-s_{ij}$ path.
Moreover, we assume $S_{ij}=\emptyset$ for the following three cases: (1) $i=j$, (2) $i\le l_i<\tau$ and
$j\in \{l_i+2,..., \tau+1\}$, and (3) $i> l_i$ and
$j\in \{l_i+1,..., \tau+1\}$.
Now, let $t=\max\{\kappa,\tau+1\}$ and let  $\Pi=\{A_1,A_2,...,A_{t}\}$ be composed by the sets
$A_i=B_i\cup \left( \cup_{j=1}^{\kappa} S_{ji}\right)$, $i=1,...,t$. Since every vertex belonging to the path between two exterior major vertices of terminal degree greater than one, is an exterior major vertex of terminal degree greater than one, then  $\Pi$ is a  partition of $V$.

Let us show that $\Pi$ is a resolving partition. Let $x,y\in V$ be different vertices of $T$. If
$x,y\in A_i$, we have the following three cases.\\
\noindent Case 1: $x,y\in S_{ji}$. In this case $d(x,A_j)=d(x,s_j)\ne d(y,s_j)=d(y,A_j)$.\\
\noindent Case 2: $x\in S_{ji}$ and $y\in S_{ki}$,  $j\ne k$. If $d(x,A_k)=d(y,A_k)$ we have $d(y,A_j)>d(y,s_k)
=d(y,A_k)=d(x,A_k)>d(x,s_j)=d(x,A_j).$\\
\noindent Case 3: $x=s_i$ and $y\in S_{ji}$. As $s_i$ has at least two
terminal vertices, there exists a terminal vertex $s_{il}$ of $s_i$,
$l\ne j$, such that $d(x,A_l)=d(x,S_{il})=1$. Hence,
$d(y,A_l)>d(y,s_j)\ge 1=d(x,A_l)$.
Therefore, for different vertices $x,y\in V$, we have $r(x|\Pi)\ne r(y|\Pi)$.
\end{proof}

The above bound is achieved, for instance, for the graph in Figure \ref{arbolcotamax}.

\begin{figure}[ht]
  \centering
  \includegraphics[width=0.3\textwidth]{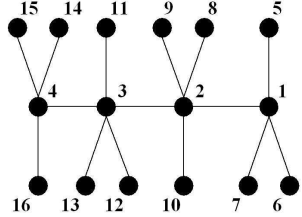}\\
  \caption{$\Pi=\{\{1,8,11,14\},\{2,5,12,15\},\{3,6,9,16\},\{4,7,10,13\}\}$ is a resolving partition.}\label{arbolcotamax}
\end{figure}

\section{On the partition dimension of generalized trees}

A cut vertex in a graph is a vertex whose removal increases the number of components of the graph and an extreme vertex is a vertex such that its closed neighborhood forms a complete graph. Also, a block is a maximal biconnected subgraph of the graph. Now, let $\mathfrak{F}$ be the family of sequences of connected graphs $G_1,G_2,...,G_k$, $k\ge 2$, such that $G_1$ is a complete graph $K_{n_1}$, $n_1\ge 2$, and $G_i$, $i\ge 2$, is obtained recursively from $G_{i-1}$ by adding a complete graph $K_{n_i}$, $n_i\ge 2$, and identifying a vertex of $G_{i-1}$ with a vertex in $K_{n_i}$.

From this point we will say that a connected graph $G$ is a \emph{generalized tree} if and only if there exists a sequence $\{G_1,G_2,...,G_k\}\in \mathfrak{F}$ such that $G_k=G$ for some $k\ge 2$. Notice that in these generalized trees every vertex is either, a cut vertex or an extreme vertex. Also, every complete graph used to obtain the generalized tree is a block of the graph. Note that if every $G_i$ is isomorphic to $K_2$, then $G_k$ is a tree, thus justifying the terminology used. In this section we will be centered in the study of partition dimension of generalized trees.

\begin{figure}[ht]
  \centering
  \includegraphics[width=0.3\textwidth]{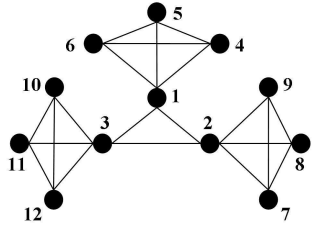}
  \caption{$\Pi=\{\{4\},\{7\},\{10\},\{5,8,11\},\{1,2,3,6,9,12\}\}$ is a resolving partition for the generalized tree.}\label{cotagentree}
\end{figure}

Let $G=(V,E)$ be a generalized tree and let $R_1,R_2,...,R_k$ be the blocks of $G$. A cut vertex $v\in V$ is a \emph{support cut vertex} if there is at least a block $R_i$ of $G$, in which $v$ is the unique cut vertex belonging to the block $R_i$. An extreme vertex is an \emph{exterior extreme vertex} if it is adjacent to only one cut vertex. Let $S=\{s_1,s_2,...,s_\zeta\}$ be the set of support cut vertices of $G$ and let $\{s_{i1},s_{i2},...,s_{il_i}\}$ be the set of exterior extreme vertices adjacent to $s_i\in S$. Also, let $Q=\{Q_1,Q_2,...,Q_\vartheta\}$ be the set of blocks of $G$ which contain more than one cut vertex and more than one extreme vertex and let $\{q_{i1},q_{i2},...,q_{it_i}\}$ be the set of extreme vertices belonging to $Q_i\in Q$. Now, let $\phi=\displaystyle\max_{1\le i\le \zeta,1\le j \le\vartheta}\{l_i,t_j\}$. With the above notation  we have the following result.

\begin{theorem}
For any generalized tree $G$, $$pd(G)\le \left\{\begin{array}{ll}
                                            \zeta+\vartheta+\phi-1, &\mbox{ if $\,\phi\ge 3$;}  \\
                                            \zeta+\vartheta+1, &\mbox{ if $\,\phi \le 2$.}
                                          \end{array}\right.
$$
\end{theorem}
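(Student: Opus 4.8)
The plan is to mimic the proof of Theorem~\ref{ThPdTrees}: exhibit an explicit partition with the claimed number of classes and verify that it resolves every pair of vertices by a case analysis driven by the block--cut-vertex structure of $G$. First I would record the structural facts I need. In a generalized tree each extreme vertex lies in a unique block, and two extreme vertices $u,w$ of one block $R$ satisfy $d(u,w)=1$ while $d(z,u)=d(z,w)$ for every $z\notin R$ (any $z$--$u$ geodesic leaves $R$ through a cut vertex to which $w$ is also adjacent). Such twin extreme vertices can only be separated by a class meeting $R$, so a resolving partition must place them in distinct classes. The construction will conservatively separate all $l_i$ exterior extreme vertices of each support cut vertex $s_i$ and all $t_j$ extreme vertices of each $Q_j\in Q$, needing at most $\phi$ ``slots'' per group; these slots may be reused across groups, which is why a single $\max$, namely $\phi$, governs the bound. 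This clique feature is absent from trees, where every block is an edge.

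For the construction in the case $\phi\ge 3$ I would put every cut vertex into one large class $A$ and distribute the extreme vertices as in the tree proof. For each support cut vertex $s_i$ I place its first exterior extreme vertex $s_{i1}$ in a private class $A_i$, and for each $Q_j\in Q$ I place its first extreme vertex $q_{j1}$ in a private class $A'_j$; the pooling classes $B_2,\dots,B_{\phi-1}$ collect the $m$-th vertices $\{s_{im}\}\cup\{q_{jm}\}$ across all groups, and the remaining $\phi$-th vertices $s_{i\phi},q_{j\phi}$ together with all still-unassigned vertices are absorbed into $A$. This yields $1+\zeta+\vartheta+(\phi-2)=\zeta+\vartheta+\phi-1$ classes. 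Before anything else I would check this is genuinely a partition: an exterior extreme vertex of $s_i$ lies in a block whose only cut vertex is $s_i$, whereas the extreme vertices of a $Q_j$ lie in a block with at least two cut vertices, so these families, and the families for distinct $i$ or distinct $j$, are pairwise disjoint. When $\phi\le 2$ no pooling classes are needed and the same recipe gives $\{A,A_1,\dots,A_\zeta,A'_1,\dots,A'_\vartheta\}$, that is, $\zeta+\vartheta+1$ classes.

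To verify resolvability I would, as in Theorem~\ref{ThPdTrees}, observe that vertices in different classes are automatically resolved and then treat pairs $x,y$ in a common class. The easy subcase is when $x,y$ are twin extreme vertices of one block: one lies in a class $C$ meeting that block with $d(\cdot,C)=0$ while the other has $d(\cdot,C)=1$. When $x,y$ sit in branches hanging from two different support cut vertices or blocks, I would use $A$ (which contains every cut vertex) to read off the distance to the nearest cut vertex and the private classes $A_i,A'_j$ to read off which group a vertex belongs to, reproducing the additive distance computations of Cases~2--4 of Theorem~\ref{ThPdTrees}. The genuinely new and hardest step is the analysis for the blocks $Q_j$, which simultaneously carry internal twin extreme vertices and several cut vertices joining different parts of $G$; here I must combine the clique argument with the path argument, choosing on each relevant geodesic the vertex nearest to $x$ (respectively $y$) exactly as $u,v$ were chosen in Case~4, and argue that equality of one coordinate forces strict inequality of another. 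Checking that such a separating private or pooling class always lies ``beyond'' a cut vertex separating any two unresolved candidates is where the bulk of the bookkeeping lies, and is the part I expect to be most delicate.
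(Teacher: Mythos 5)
Your partition is exactly the one the paper constructs: a private class for the first exterior extreme vertex of each support cut vertex and for the first extreme vertex of each block of $Q$, pooled classes collecting the $m$-th vertices ($2\le m\le \phi-1$) across all groups, and everything else --- all cut vertices, the $\phi$-th member of a group of maximal size, and the lone extreme vertices of blocks outside $Q$ having several cut vertices --- absorbed into one class $A$. The class count, the $\phi\le 2$ variant, the disjointness check, and the twin observation for extreme vertices of a common block are all correct and match the paper.

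The gap is in the verification, and you have located the difficulty in the wrong place. Pairs inside a block $Q_j$ are the \emph{easy} part: by construction $q_{j1},\dots,q_{jt_j}$ land in pairwise distinct classes, so the ``combination of the clique argument with the path argument'' you brace for is never needed. What actually requires an argument, and what your sketch does not supply, is the separation of two vertices that both fall into the catch-all class $A$: two cut vertices; a cut vertex versus an extreme vertex lying in $A$; or two extreme vertices from blocks that contribute nothing to the distinguished classes. Your stated plan of using $A$ ``to read off the distance to the nearest cut vertex'' is useless here, since $d(\cdot,A)=0$ identically on $A$. The paper settles these pairs with two concrete devices you would need to reproduce: for a cut vertex $x$, some first exterior extreme vertex $s_{i1}$ is separated from $y$ by $x$ (or $y$ lies beyond $x$ on the way to $s_{i1}$), whence $d(x,A_i)\ne d(y,A_i)$; and for two residual extreme vertices $x\in H$, $y\in K$, one takes the cut vertex $u$ of $H$ farthest from $y$ and an $s_{i1}$ reached from $x$ through $u$, obtaining $d(x,A_i)=1+d(u,s_{i1})<d(y,u)+d(u,s_{i1})=d(y,A_i)$. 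Until those cases are written out, what you have is the correct construction together with an unproven resolvability claim.
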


\begin{proof}
For each support cut vertex $s_i\in S$, let $A_i=\{s_{i1}\}$ and for each block $Q_j\in Q$, let $B_j=\{q_{j1}\}$. Let us suppose $\phi\ge 3$.  For every $j\in \{2,...,l_i\}$ we take $M_{ij}=\{s_{ij}\}$ and, if $l_i<\phi-1$, then for every $j\in \{l_{i+1},...,\phi-1\}$ we consider $M_{ij}=\emptyset$.  Analogously,  for every $j\in \{2,...,t_i\}$ we take $N_{ij}=\{q_{ij}\}$ and,  if $t_i<\phi-1$, then for every $j\in \{t_{i+1},...,\phi-1\}$ we consider $N_{ij}=\emptyset$. Now, let $C_j=\bigcup_{i=1}^{\max \{\zeta,\vartheta\}}(M_{ij}\cup N_{ij})$, with $j\in \{2,...,\phi-1\}$.

Let us prove that $\Pi=\{A,A_1,A_2,...A_{\zeta},B_1,B_2,...,B_{\vartheta},C_2,C_3,...,C_{\phi-1}\}$ is a  resolving partition of $G$, where $A=V-\cup_{i=1}^{\zeta}A_i-\cup_{i=1}^{\vartheta}B_i-\cup_{i=2}^{\phi-1}C_i$.  To begin with, let $x,y$ be two different vertices of $G$. We have the following cases.

Case 1: $x$ is a cut vertex or $y$ is a cut vertex. Let us suppose, for instance, $x$ is a cut vertex. So there exists an extreme vertex $s_{i1}$ such that $x$ belongs to a shortest $y- s_{i1}$ path or $y$ belongs to a shortest $x-s_{i1}$ path. Hence, we have $d(x,A_i)=d(x,s_{i1})\ne d(y,s_{i1})=d(y,A_i)$.

Case 2: $x,y$ are extreme vertices. If $x,y$ belong to the same block of $G$, then $x,y$ belong to different sets of $\Pi$. On the contrary, if $x,y$ belong to different blocks in $G$, then let us suppose there exists an extreme vertex $c$ such that $d(x,c)\le 1$ or $d(y,c)\le 1$. We can suppose $c\in A_i$, for some $i\in \{1,...,\zeta\}$, or $c\in B_j$, for some $j\in \{1,...,\vartheta\}$. Without loss of generality, we suppose that $d(x,c)\le 1$. Since $x$ and $y$ belong to different blocks of $G$, we have $d(y,c)>1$. So we obtain either $d(x,A_i)=d(x,c)\le 1<d(y,c)=d(y,A_i)$ or $d(x,B_j)=d(x,c)\le 1<d(y,c)=d(y,B_j)$.

Now, if there exists no such a vertex $c$, then there exist two blocks $H,K\not\in Q$ with $x\in H$ and $y\in K$, which contain more than one cut vertex and only one extreme vertex. So  $x,y\in A$. Let $u\in H$ be a cut vertex  such that $d(y,u)=\max_{v\in H}d(y,v)$. Hence, there exists an extreme vertex $s_{i1}$ such that $u$ belongs to a shortest $x-s_{i1}$ path and $d(y,s_{i1})=d(y,u)+d(u,s_{i1})$. As $x,y$ belong to different blocks and $d(y,u)=\max_{v\in H}d(y,v)$ we have $d(y,u)\ge 2$. Thus,
\begin{align*}
d(y,A_i)&=d(y,s_{i1})\\
&=d(y,u)+d(u,s_{i1})\\
&\ge 2+d(u,s_{i1})\\
&>1+d(u,s_{i1})\\
&=d(x,u)+d(u,s_{i1})\\
&=d(x,A_i).
\end{align*}
Hence, we conclude that if $\phi\ge 3$, then for every $x,y\in V$, $r(x|\Pi)\ne r(y|\Pi)$. Therefore, $\Pi$ is a resolving partition.

On the other hand, if $\phi\le 2$, then  $\Pi'=\{A,A_1,A_2,...A_{\zeta},B_1,B_2,...,B_{\vartheta}\}$ is a partition of $V$. Proceeding as above we obtain that $\Pi'$ is a resolving partition.
\end{proof}

The above bound is achieved, for instance, for the graph in Figure \ref{cotagentree}, where $\zeta=3$, $\vartheta=0$ and $\phi=3$. Also, notice that for the particular case of trees we have $\zeta=\xi$, $\phi=\theta$ and $\vartheta=0$. So  the above result leads to Corollary \ref{ThPdTrees2}.



\end{document}